\title{Galois lines for the Artin--Schreier--Mumford curve}
\author{Satoru Fukasawa}
\subjclass[2010]{14H50, 14H37}
\keywords{Galois line, Galois point, Artin--Schreier--Mumford curve, automorphism group}
\thanks{The author was partially supported by JSPS KAKENHI Grant Number JP19K03438}
\address{Department of Mathematical Sciences, Faculty of Science, Yamagata University, Kojirakawa-machi 1-4-12, Yamagata 990-8560, Japan}
\email{s.fukasawa@sci.kj.yamagata-u.ac.jp}
\newtheorem{thm}{Theorem}
\newtheorem{lem}{Lemma}
\newtheorem{fact}{Fact}
\theoremstyle{definition}
\newtheorem{rem}{Remark}
\begin{document}
\begin{abstract}
The arrangement of all Galois lines for the Artin--Schreier--Mumford curve in the projective $3$-space is described.  
It may be surprising that there exist infinitely many Galois lines intersecting this curve. 
\end{abstract}
\maketitle

\section{Introduction} 

In 1996, Hisao Yoshihara introduced the notion of a Galois point: 
for a plane curve $C \subset \mathbb{P}^2$ over an algebraically closed field $k$, a point $P \in \mathbb{P}^2$ is called a Galois point if the function field extension $k(C)/\pi_P^*k(\mathbb{P}^1)$ induced by the projection $\pi_P$ from $P$ is Galois (\cite{miura-yoshihara, yoshihara}). 
Analogously, Yoshihara introduced the notion of a {\it Galois line}: 
a line $\ell \subset \mathbb{P}^3$ is said to be Galois for a space curve $X \subset \mathbb{P}^3$, if the extension $k(X)/\pi_{\ell}^*k(\mathbb{P}^1)$ induced by the projection $\pi_{\ell}: X \dashrightarrow \mathbb{P}^1$ from $\ell$ is Galois (\cite{duyaguit-yoshihara, yoshihara2}). 
The following problem is raised in \cite{yoshihara-fukasawa}. 
\begin{itemize}
\item[(a)] Find Galois lines $\ell$ for $X$ in two cases where $X \cap \ell=\emptyset$ and $X \cap \ell \ne \emptyset$. 
\end{itemize}
It is hard to solve this problem, even for well studied curves. 
The present author and Higashine determined the arrangement of all Galois lines for the Giulietti--Korchm\'{a}ros curve (\cite{fukasawa-higashine}). 
However, this is only one known example of a curve in $\mathbb{P}^3$ of degree $d >3$ such that {\it all} Galois lines are determined (under the assumption that the automorphism group is non-trivial). 
Therefore, it would be good to provide such a curve. 

In this article, we consider the plane curve $C \subset \mathbb{P}^2$ defined by 
$$ (x^q-x)(y^q-y)=c, $$
where $p>0$ is the characteristic of $k$, $q \ge 3$ is a power of $p$, and $c \in k \setminus \{0\}$, 
which is called the Artin--Schreier--Mumford curve. 
This curve is important in the study of the automorphism groups of algebraic curves, since this is an ordinary curve and its automorphism group is large compared to its genus (see \cite{subrao}). 
Recently, the full automorphism group of this curve over any algebraically closed field $k$ was determined by Korchm\'{a}ros and Montanucci \cite{korchmaros-montanucci}. 
(For $q=p$, this result was obtained in \cite{valentini-madan}.)  
The smooth model of $C$ is denoted by $X$. 
It is known that the morphism 
$$ \varphi: X \rightarrow \mathbb{P}^3; \ (x: y : 1: xy)$$
is an embedding (see \cite[p.181]{subrao}, \cite{korchmaros-montanucci}). 
In this article, the arrangement of all Galois lines for $\varphi(X)$ is determined, as follows. 

\begin{thm} \label{main} 
Let $\ell \subset \mathbb{P}^3$ be a line, and let $G_{\ell}$ be the Galois group if $\ell$ is a Galois line. 
Then, $\ell$ is a Galois line for $\varphi(X)$ if and only if $\ell$ is one of the following: 
\begin{itemize}
\item[(a)] an $\mathbb{F}_q$-line in the plane $\{Z=0\}$, or
\item[(b)] the line defined by $W-a X=Y-a Z=0$ or $W- a Y=X-a Z=0$, where $a \in k$. 
\end{itemize}
For the case (a), there exist $q-1$ lines of them with $G_{\ell} \cong \mathbb{F}_q \rtimes C_2$, $q^2$ lines with $G_{\ell} \cong \mathbb{F}_q^* \rtimes C_2$, and two lines with $G_{\ell} \cong \mathbb{F}_q$, where $C_2$ is a cyclic group of order $2$. 
For the case (b), $G_{\ell}=G_{L_1}$ or $G_{L_2}$ in the automorphism group ${\rm Aut}(X)$ of $X$, where $L_1$ and $L_2$ are two Galois lines with $G_{L_1} \cong \mathbb{F}_q$ and $G_{L_2} \cong \mathbb{F}_q$ in (a). 
\end{thm}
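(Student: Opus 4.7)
The proof splits into verifying sufficiency for each family and then classifying all Galois lines using the Korchm\'aros--Montanucci description of $\mathrm{Aut}(X)$. For case (b), the projection of $(x:y:1:xy)\in \varphi(X)$ from $\ell_a: W-aX=Y-aZ=0$ is $(x(y-a):y-a)=(x:1)$, so $\pi_{\ell_a}$ equals the coordinate function $x$. The defining equation $(x^q-x)(y^q-y)=c$ then shows $k(X)/k(x)$ is the Artin--Schreier extension $y^q-y=c/(x^q-x)$, Galois with group $\{y\mapsto y+\beta\}\cong \mathbb{F}_q$; the second family is handled by the symmetry $(x,y)\mapsto (y,x)$.

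For case (a), a line in $\{Z=0\}$ given by $\alpha X+\beta Y+\gamma W=0$ with $(\alpha,\beta,\gamma)\in \mathbb{F}_q^3\setminus\{0\}$ induces the projection $f=\alpha x+\beta y+\gamma xy$ on $X$. A pole-divisor computation at the $2q$ points above $\{x=\infty\}\cup \{y=\infty\}$ shows the cover has degree $q$ (the exceptional cases $f\in \{x,y\}$), $2(q-1)$ (when $\gamma\ne 0$), or $2q$ (when $\gamma=0$ and $\alpha\beta\ne 0$). In each sub-case I exhibit a subgroup of $\mathrm{Aut}(X)$ of matching order fixing $f$: when $\gamma\ne 0$, the translation $\sigma_{\beta/\gamma}\tau_{\alpha/\gamma}$ reduces $f$ to $\gamma xy$, whose stabiliser $\langle\rho_\lambda,\iota\rangle\cong \mathbb{F}_q^*\rtimes C_2$ is immediate; when $\gamma=0$ and $\alpha\beta\ne 0$, the stabiliser is generated by the anti-diagonal translations $\sigma_u\tau_{-\alpha u/\beta}$ together with the involution $\iota\rho_{\alpha/\beta}$, giving $\mathbb{F}_q\rtimes C_2$. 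A count over $\mathbb{P}^2(\mathbb{F}_q)$ then yields the totals $2$, $q^2$, and $q-1$.

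For the necessity, a direct calculation on the four generators $\sigma_u,\tau_v,\rho_\lambda,\iota$ shows $\mathrm{Aut}(X)$ acts linearly on $\langle 1,x,y,xy\rangle$ and hence on $\mathbb{P}^3$, preserving $\varphi(X)$. Any Galois group $G_\ell$ therefore sits inside $\mathrm{Aut}(X)$ and preserves the pencil of hyperplanes through $\ell$; conversely, a $2$-dimensional $G$-invariant subspace of $\langle 1,x,y,xy\rangle$ determines a line that is Galois with group $G$ iff the associated cover has degree $|G|$. Classification then reduces to enumerating subgroups $G\subset \mathrm{Aut}(X)$ with $|G|\le \deg \varphi(X)=2q$ and $X/G\cong \mathbb{P}^1$ and finding their $2$-dimensional invariant subspaces of linear forms.

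The main obstacle is the enumeration of subgroups. Riemann--Hurwitz with wild ramification at the infinity points of $X$ is required to rule out subgroups whose quotient has positive genus, most notably the diagonal translation subgroups $\{\sigma_u\tau_{\lambda u}: u\in \mathbb{F}_q\}$ with $\lambda\in \mathbb{F}_q^*$. A striking feature explaining the infinitude of lines in (b) is that every $\ell_a$ realises the same cover $\pi=x$: the pencil of hyperplanes through $\ell_a$ restricted to $\varphi(X)$ has $a$-dependent base divisor $qP_a$ (where $P_a\in X$ is the point above $(x,y)=(\infty,a)$), so distinct values of $a$ produce distinct lines in $\mathbb{P}^3$ all realising the single Galois cover $X\to \mathbb{P}^1$.
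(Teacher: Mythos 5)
Your sufficiency argument is essentially the paper's: for type (b) the projection from $\ell_a: W-aX=Y-aZ=0$ is $(x(y-a):y-a)=(x:1)$, giving the Artin--Schreier extension $k(x,y)/k(x)$, and for type (a) one matches the degree of $f=\alpha x+\beta y+\gamma xy$ (namely $q$, $2(q-1)$ or $2q$) against an explicit stabilizer in ${\rm Aut}(X)$; the counts $2$, $q^2$, $q-1$ are correct. The genuine gap is in your necessity argument. You claim that $G_\ell$ ``preserves the pencil of hyperplanes through $\ell$,'' so that classification reduces to enumerating subgroups $G$ together with $G$-invariant $2$-dimensional subspaces of $\langle 1,x,y,xy\rangle$. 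This is false, and it fails precisely for the type (b) lines that make the theorem interesting. Take $\ell_a$ with Galois group $G_{\ell_a}=\{y\mapsto y+\beta\}$. The element $y\mapsto y+\beta$ acts on $\mathbb{P}^3$ by $(X:Y:Z:W)\mapsto (X:Y+\beta Z:Z:W+\beta X)$, which carries the point $(s:at:t:as)\in\ell_a$ to $(s:(a+\beta)t:t:(a+\beta)s)\in\ell_{a+\beta}$; equivalently, the subspace $\langle xy-ax,\ y-a\rangle$ is not $G_{\ell_a}$-invariant, since $y-a\mapsto (y-a)+\beta$ and $xy-ax\mapsto (xy-ax)+\beta x$. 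The group preserves the moving part of the pencil (the fibres of $x$) but permutes the $a$-dependent base loci $\{y=a\}$ --- exactly the phenomenon you note in your last paragraph (where, incidentally, for $a\notin\mathbb{F}_q$ the base divisor is a sum of $q$ distinct points with $y=a$, not $qP_a$). An enumeration of invariant $2$-dimensional subspaces would therefore miss every $\ell_a$ with $a\notin\mathbb{F}_q$ and would contradict your own sufficiency computation.

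The paper's necessity proof takes a different, genuinely geometric route that sidesteps this: it splits on $\ell\cap\varphi(X)$ and on the point $R=\ell\cap\{Z=0\}$, using transitivity of $G_\ell$ on fibres (Fact \ref{Galois extension}) together with faithfulness of the action on $\Omega_1\cup\Omega_2$ (Fact \ref{automorphism}); the case $\ell\cap\varphi(X)=\emptyset$ is reduced to the known classification of Galois points for the plane model (Fact \ref{Galois point}) by showing $\ell\ni(0:0:0:1)$; a single non-tangential intersection point is excluded because $2q-1\nmid 2q^2(q-1)$; and the remaining cases are forced into the families (a) and (b) by the collinearity Lemma \ref{lines}. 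If you want to rescue your framework, you must classify pairs consisting of a subgroup and a $G$-invariant \emph{fibration} (degree-$|G|$ base-point-free part), and then separately determine all lines whose pencil has that moving part --- which is in effect what Lemma \ref{lines}(b) accomplishes.
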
 

It is remarkable that Galois lines intersecting a space curve $X \subset \mathbb{P}^3$ can {\it not} be determined by subgroups of ${\rm Aut}(X)$, according to lines of type (b) in Theorem \ref{main} (see Remark \ref{infinitely many Galois lines} for further examples).  
An example of a space curve of genus $4$ and degree $6$ with infinitely many Galois lines was already presented by Komeda and Takahashi \cite{takahashi}.

\section{Preliminaries}
Since we investigate Galois lines, the following fact on Galois extensions is needed later (see \cite[III.7.1, III.7.2, III.8.2]{stichtenoth}). 

\begin{fact} \label{Galois extension}
Let $\pi: C \rightarrow C'$ be a surjective morphism of smooth projective curves. 
Assume that the field extension $k(C)/\pi^*k(C')$ is Galois. 
Then, the following hold. 
\begin{itemize} 
\item[(a)] The Galois group acts on each fiber of $\pi$ transitively. 
\item[(b)] For points $Q_1$ and $Q_2 \in C$ with $\pi(Q_1)=\pi(Q_2)$, the ramification indices are the same. 
\item[(c)] For each point $Q \in C$, the order of the stabilizer subgroup of $Q$ is equal to the ramification index at $Q$.  
\end{itemize}
\end{fact}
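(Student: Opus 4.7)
The plan is to reduce everything to the standard Galois theory of places in function fields and invoke Chapter III of Stichtenoth directly. I would identify points of $C$ (respectively $C'$) with places of the function field $k(C)$ (respectively $\pi^*k(C')$), and identify the fiber $\pi^{-1}(P)$ with the set of places of $k(C)$ lying over the place $\mathfrak{p}$ of $\pi^*k(C')$ corresponding to $P$. The Galois group $G = \mathrm{Gal}(k(C)/\pi^*k(C'))$ acts on $k(C)$ by $\pi^*k(C')$-automorphisms, hence on the set of places of $k(C)$, and this action preserves $\mathfrak{p}$ (so it permutes the fiber). Finally, I would use that $k$ is algebraically closed, so every residue field of a place of $k(C)$ is just $k$ and the residue degree $f$ at every place equals $1$.

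For (a), I would prove transitivity of $G$ on $\pi^{-1}(P)$ by the classical contradiction argument using the approximation theorem. Suppose $Q_1, Q_2 \in \pi^{-1}(P)$ lie in distinct $G$-orbits. Using weak approximation in $k(C)$, choose $z \in k(C)$ such that $v_{\sigma(Q_1)}(z) > 0$ for every $\sigma \in G$ while $v_{Q_2}(z) < 0$. Then the norm $N(z) = \prod_{\sigma \in G} \sigma(z) \in \pi^*k(C')$ has strictly positive valuation at $\mathfrak{p}$ when computed via $Q_1$, but strictly negative valuation at $\mathfrak{p}$ when computed via $Q_2$, a contradiction. (Alternatively, I would simply cite \cite[III.7.1]{stichtenoth}.)

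For (b), note that every $\sigma \in G$ satisfies $\pi \circ \sigma = \pi$, so $\sigma$ is an automorphism of $C$ over $C'$; hence the ramification index is preserved under $\sigma$, i.e.\ $e_{\sigma(Q)} = e_Q$. Combined with (a), all points in the fiber have the same ramification index. For (c), I would apply the orbit--stabilizer theorem together with the fundamental equality. If $e$ is the common ramification index over $P$ and $g = |\pi^{-1}(P)|$, then $eg = \sum_{Q \in \pi^{-1}(P)} e_Q f_Q = [k(C):\pi^*k(C')] = |G|$ since $f_Q = 1$ for all $Q$ (here is where algebraic closedness of $k$ enters). Orbit--stabilizer gives $|\mathrm{Stab}(Q)| = |G|/g = e$, which is (c).

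The main conceptual obstacle is (a); parts (b) and (c) are essentially formal once transitivity is known, together with the trivial-residue-extension observation $f_Q = 1$. Since the statement is a standard fact attributed to Stichtenoth, in the actual write-up I would be comfortable simply quoting the three cited results; the plan above merely records how each of them is proved for the convenience of the reader.
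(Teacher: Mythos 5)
The paper gives no proof of this Fact at all: it is stated as a citation to \cite[III.7.1, III.7.2, III.8.2]{stichtenoth}, so your fallback of simply quoting those results coincides with what the paper does. Your reconstructions of (b) and (c) are also correct and standard: every $\sigma \in G$ satisfies $\pi \circ \sigma = \pi$, hence preserves ramification indices, which together with (a) gives (b); and the fundamental equality $\sum_{Q \in \pi^{-1}(P)} e_Q f_Q = [k(C):\pi^*k(C')] = |G|$, with $f_Q = 1$ because $k$ is algebraically closed, combined with orbit--stabilizer and transitivity, gives (c).

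However, your approximation argument for (a) has a step that fails as written. You choose $z$ with $v_{\sigma(Q_1)}(z) > 0$ for all $\sigma \in G$ and $v_{Q_2}(z) < 0$, and then assert that $N(z) = \prod_{\sigma \in G} \sigma(z)$ has strictly negative valuation at $\mathfrak{p}$ ``computed via $Q_2$''. But $v_{Q_2}(N(z)) = \sum_{\sigma \in G} v_{Q_2}(\sigma(z)) = \sum_{\sigma \in G} v_{\sigma^{-1}(Q_2)}(z)$, and your constraints control only the single term $\sigma = \mathrm{id}$: the remaining places $\sigma^{-1}(Q_2)$ lie in the $G$-orbit of $Q_2$, which (under the contradiction hypothesis) is disjoint from the orbit of $Q_1$, and there $z$ is unconstrained and may have arbitrarily large positive valuation. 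In fact, since $z$ has positive valuation on the whole orbit of $Q_1$, one gets $v_{\mathfrak{p}}(N(z)) > 0$ and hence $v_{Q_2}(N(z)) = e_{Q_2}\, v_{\mathfrak{p}}(N(z)) > 0$, so the claimed negativity is not merely unjustified but impossible for any such $z$. The repair is easy and uses nothing beyond what you already invoke: by weak approximation over the finite set of places given by the two (disjoint) orbits, demand $v_{\sigma(Q_2)}(z) < 0$ for \emph{all} $\sigma \in G$ as well; then both valuation computations go through and the contradiction follows. (Alternatively, use the choice in Stichtenoth's own proof: $v_{Q_2}(z) > 0$ and $v_{\sigma(Q_1)}(z) = 0$ for all $\sigma$, comparing $v_{\mathfrak{p}}(N(z)) > 0$ with $v_{\mathfrak{p}}(N(z)) = 0$.)
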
 

The system of homogeneous coordinates on $\mathbb{P}^3$ is denoted by $(X:Y:Z:W)$. 
For $\mathbb{P}^2$, it is denoted by $(X: Y: Z)$. 
For different points $P, Q \in \mathbb{P}^3$, the line passing through $P$ and $Q$ is denoted by $\overline{PQ}$. 

Hereafter, we consider the Artin--Schreier--Mumford curve, which is denoted by $C$, except for Remark \ref{infinitely many Galois lines}. 
The curve $C$ is the image of the composite map of $\varphi$ and the projection from $(0:0:0:1) \in \mathbb{P}^3$.   
The set of all poles of $x$ (resp. of $y$) is denoted by $\Omega_1$ (resp. by $\Omega_2$), which coincides with the set of all zeros of $y^q-y$ (resp. $x^q-x$). 
The sets $\Omega_1$ and $\Omega_2$ consist of $q$ points.  
The pole of $x$ (resp. of $y$) corresponding to $y=\alpha \in \mathbb{F}_q$ (resp. $x=\alpha \in \mathbb{F}_q$) is denoted by $P_{\alpha}$ (resp. by $Q_\alpha$). 
For the point $P_{\alpha}$, $t=\frac{1}{x}$ is a local parameter at $P_{\alpha}$. 
It follows that
$$ \varphi=(x:y:1:x y)=(1:y/x:1/x: y)=(1:y t: t: y), $$
and $ \varphi(P_{\alpha})=(1:0:0:\alpha)$. 
Therefore, $q$ points of $\varphi(\Omega_1)$ (resp. of $\varphi(\Omega_2)$) are collinear, and the line $L_1$ (resp. $L_2$) containing $\varphi(\Omega_1)$ (resp. $\varphi(\Omega_2)$) is defined by $Y=Z=0$ (resp. $X=Z=0$). 
Note that 
$$ f_x \frac{dx}{dt}+f_y \frac{d y}{dt}=-(y^q-y)\frac{d x}{d t}-(x^q-x)\frac{d y}{d t}=0 $$
in $k(C)$, where $f(x, y)=(x^q-x)(y^q-y)-c$. 
This implies that 
$$ \frac{d y}{d t}(P_{\alpha})=0. $$
The tangent line at $\varphi(P_{\alpha})$ is spanned by points $\varphi(P_{\alpha})$ and 
\begin{eqnarray*} 
\left(0: \frac{d y}{d t}(P_{\alpha}) t(P_{\alpha})+y(P_{\alpha}):1:\frac{d y}{d t}(P_{\alpha})\right) &=&(0:\alpha:1:0).  
\end{eqnarray*}
Therefore, the tangent line at $\varphi(P_{\alpha})$ is defined by 
$$ W-\alpha X=Y-\alpha Z=0. $$

The following fact is needed for the proof of the only-if-part of our main theorem (see \cite[Section 4]{korchmaros-montanucci}). 

\begin{fact} \label{orders} 
Let $P \in \Omega_1 \cup \Omega_2$ and let $H \ni \varphi(P)$ be a hyperplane. 
Then, ${\rm ord}_P\varphi^*H=1, q$ or $q+1$.  
\end{fact}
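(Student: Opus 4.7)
The plan is to work locally near $P \in \Omega_1 \cup \Omega_2$. By the $x \leftrightarrow y$ symmetry of the defining equation (which swaps $\Omega_1$ with $\Omega_2$ and corresponds to the projective automorphism exchanging the coordinates $X$ and $Y$), I may assume $P = P_\alpha \in \Omega_1$. With the local parameter $t = 1/x$ at $P_\alpha$, the parametrization $\varphi = (1 : yt : t : y)$ already recorded in the excerpt gives $\varphi(P_\alpha) = (1:0:0:\alpha)$.

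The first step is to pin down ${\rm ord}_{P_\alpha}(y - \alpha)$. Setting $u := y - \alpha$ and using $\alpha^q = \alpha$, the defining equation rewrites, after substituting $x = 1/t$, as
\[
u^q - u \;=\; \frac{c\, t^q}{1 - t^{\,q-1}}.
\]
Since $u$ vanishes at $P_\alpha$ and $q \geq 3$, setting $m := {\rm ord}_{P_\alpha}(u) \geq 1$ yields ${\rm ord}(u^q) = qm > m$, whence ${\rm ord}(u^q - u) = m$, while the right-hand side has order exactly $q$. Hence ${\rm ord}_{P_\alpha}(y - \alpha) = q$.

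Next, I would parametrize the hyperplanes through $\varphi(P_\alpha)$. A hyperplane $H : a_0 X + a_1 Y + a_2 Z + a_3 W = 0$ contains $(1:0:0:\alpha)$ iff $a_0 + \alpha a_3 = 0$, so the pullback under $\varphi$ is
\[
\varphi^* H \;=\; a_3\,(y - \alpha) \;+\; t\,(a_1 y + a_2).
\]
I would split into three cases. If $a_1 \alpha + a_2 \neq 0$, then $a_1 y + a_2$ is a unit at $P_\alpha$, the second summand has order $1$ and dominates, so ${\rm ord}_{P_\alpha}\varphi^*H = 1$. If $a_1 \alpha + a_2 = 0$ with $(a_1,a_2) \neq (0,0)$, then necessarily $a_1 \neq 0$ and $a_1 y + a_2 = a_1(y - \alpha)$ has order $q$, so the second summand has order $q + 1$; combined with $a_3 (y - \alpha)$ of order $q$ (or of order $\infty$ when $a_3 = 0$), the total has order $q$ or $q+1$. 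Finally, $a_1 = a_2 = 0$ forces $H$ to be $W - \alpha X = 0$, whose pullback $a_3 (y - \alpha)$ has order $q$. Every case lands in $\{1, q, q+1\}$.

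The main obstacle is recognizing the Artin--Schreier shape $u^q - u = (\text{unit})\cdot t^q$ of the local equation, which is what pins ${\rm ord}_{P_\alpha}(y - \alpha)$ to exactly $q$; once this is in hand the hyperplane analysis is just a comparison of three valuations. A pleasant consistency check is that the top value $q+1$ is attained precisely when $H$ contains the tangent line $W - \alpha X = Y - \alpha Z = 0$ computed just before the statement.
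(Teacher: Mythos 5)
Your argument is correct and complete. Note that the paper itself gives no proof of this statement: it is imported as a Fact with a citation to Section~4 of Korchm\'aros--Montanucci, so there is no in-paper argument to compare against. Your computation is a clean self-contained verification: the reduction to $P_\alpha\in\Omega_1$ via the $x\leftrightarrow y$ symmetry is legitimate (the swap is induced by the linear map $(X:Y:Z:W)\mapsto(Y:X:Z:W)$, so it preserves hyperplane intersection orders); the identity $u^q-u=c\,t^q/(1-t^{q-1})$ with $u=y-\alpha$ correctly pins ${\rm ord}_{P_\alpha}(y-\alpha)=q$, since ${\rm ord}(u^q-u)=\min(qm,m)=m$ while the right side visibly has order $q$; and since $X$ does not vanish at $\varphi(P_\alpha)=(1:0:0:\alpha)$, the local expression $\varphi^*H=a_3(y-\alpha)+t(a_1y+a_2)$ is the right thing to take the order of, and your three-case comparison of valuations $1$, $q$, $q+1$ is exhaustive and correct. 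The only inaccuracy is in your closing ``consistency check'': the value $q+1$ is \emph{not} attained precisely when $H$ contains the tangent line. By your own case analysis, every $H$ containing the tangent line $W-\alpha X=Y-\alpha Z=0$ has $a_1\alpha+a_2=0$, hence order $q$ when $a_3\neq 0$ and order $q+1$ only for the single osculating plane $Y-\alpha Z=0$; the correct dichotomy is that $H$ contains the tangent line if and only if ${\rm ord}_{P}\varphi^*H\geq 2$, i.e.\ the order is $q$ or $q+1$. This slip is outside the proof proper and does not affect its validity, but you should fix the phrasing, especially since the main text of the paper uses exactly this tangent-line criterion (in step (iii) of the only-if part) when it invokes the Fact.
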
 

The following theorem on the full automorphism group ${\rm Aut}(X)$ was proved by Korchm\'{a}ros and Montanucci \cite{korchmaros-montanucci}.  

\begin{fact} \label{automorphism} 
For ${\rm Aut}(X)$, the following holds. 
\begin{itemize} 
\item[(a)] ${\rm Aut}(X)$ acts on $\Omega_1 \cup \Omega_2$ faithfully. 
\item[(b)] There exists an injective homomorphism ${\rm Aut}(X) \hookrightarrow PGL(4, k)$.
\item[(c)] $|{\rm Aut}(X)|=2q^2(q-1)$. 
\end{itemize}
\end{fact}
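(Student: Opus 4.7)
The plan is to establish (b) first, then (a) as a consequence, and (c) by combining an explicit lower bound with an upper bound derived from the linear action.

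For (b), the embedding $\varphi$ arises from the $4$-dimensional linear system $V = \langle 1, x, y, xy \rangle \subset k(X)$, which sits inside $L(D)$ for $D = D_1 + D_2$, where $D_i$ is the reduced divisor supported on $\Omega_i$. The heart of the argument is to show that ${\rm Aut}(X)$ stabilizes the set $\Omega_1 \cup \Omega_2$ and that $L(D) = V$. Invariance of $\Omega_1 \cup \Omega_2$ I would deduce from the intrinsic characterization supplied by Fact \ref{orders}: the points of $\Omega_1 \cup \Omega_2$ are distinguished by having hyperplane-intersection orders only in $\{1, q, q+1\}$, whereas for a generic $P \in X \setminus (\Omega_1 \cup \Omega_2)$ a direct local computation of the tangent hyperplane at $\varphi(P)$ yields an order of $2 \notin \{1, q, q+1\}$ (since $q \ge 3$). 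The identification $V = L(D)$ is then checked by pole-order analysis at the points $P_\alpha$ and $Q_\beta$. Hence ${\rm Aut}(X)$ acts linearly on $V$, giving ${\rm Aut}(X) \to PGL(V) = PGL(4, k)$; injectivity follows since any $\sigma$ acting trivially on $V$ fixes the coordinate functions $x, y$ and hence acts trivially on $k(X)$.

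For (a), suppose $\sigma \in {\rm Aut}(X)$ fixes every point of $\Omega_1 \cup \Omega_2$. By (b) it corresponds to a matrix $A \in GL(4, k)$ fixing the $2q$ points $\varphi(P_\alpha) = (1{:}0{:}0{:}\alpha)$ and $\varphi(Q_\beta) = (0{:}1{:}0{:}\beta)$ for $\alpha, \beta \in \mathbb{F}_q$. The conditions $A(1{:}0{:}0{:}\alpha) = \lambda_\alpha(1{:}0{:}0{:}\alpha)$ become polynomial identities in $\alpha$; since $q \ge 3$, equating coefficients of distinct powers of $\alpha$ kills most off-diagonal entries of $A$ and forces $a_{11} = a_{44}$. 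The symmetric analysis at the $\varphi(Q_\beta)$ gives $a_{22} = a_{44}$ and analogous vanishings. Finally, the condition that $A$ preserve the quadric $Q: XY = ZW$ (on which $\varphi(X)$ lies) forces the remaining off-diagonal entries to vanish, so $A$ is scalar and $\sigma = {\rm id}$.

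For (c), the lower bound is exhibited by the automorphisms $(x, y) \mapsto (\alpha x + \beta, \alpha^{-1} y + \delta)$ with $\alpha \in \mathbb{F}_q^*$, $\beta, \delta \in \mathbb{F}_q$, together with the swap $(x, y) \mapsto (y, x)$; each preserves $(x^q - x)(y^q - y) = c$ by direct substitution, and the resulting group has order $2q^2(q-1)$. For the matching upper bound, I would first verify that $Q: XY = ZW$ is the unique quadric through $\varphi(X)$; part (b) then yields ${\rm Aut}(X) \hookrightarrow {\rm Aut}(Q) \cong (PGL(2, k) \times PGL(2, k)) \rtimes C_2$, with each element acting on the two rulings of $Q$ as $(x, y) \mapsto (g(x), f(y))$ (possibly after the swap). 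Requiring $(g(x)^q - g(x))(f(y)^q - f(y)) = c$ in $k(X)$: a M\"obius $g$ with a finite pole would give $g(x)^q - g(x)$ a pole in $x$ that no function of $y$ alone can cancel, so $g$ (and similarly $f$) must be affine. A direct computation on the affine coefficients then forces $g(x) = \alpha x + \beta$ and $f(y) = \alpha^{-1}y + \delta$ with $\alpha \in \mathbb{F}_q^*$, $\beta, \delta \in \mathbb{F}_q$, giving exactly $2q^2(q-1)$ elements in all.

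The main obstacle is the intrinsic characterization of $\Omega_1 \cup \Omega_2$ needed for invariance in (b): one must rule out the possibility that some point of $X \setminus (\Omega_1 \cup \Omega_2)$ has hyperplane-order sequence exactly $(1, q, q+1)$, which requires a careful analysis of osculating hyperplanes at generic and higher-flex points. The remaining steps are essentially linear algebra, direct substitution into the defining equation, and standard Riemann--Roch--style pole-order computations.
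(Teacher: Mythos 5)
First, a point of comparison: the paper does not prove this statement at all --- it is quoted as a known result of Korchm\'{a}ros--Montanucci \cite{korchmaros-montanucci} (and of Valentini--Madan \cite{valentini-madan} for $q=p$) --- so your proposal is measured against the cited literature rather than against an argument in the paper. Much of your outline is in fact sound: $V=L(D)$ does hold (for instance because $X$ has bidegree $(q,q)$ on the quadric $Q:XY=ZW$ and $H^1(Q,\mathcal{O}_Q(1-q,1-q))=0$, so the hyperplane series is complete); $Q$ is indeed the unique quadric through $\varphi(X)$; the linear algebra deducing (a) from (b) works; and the computation for (c) (M\"obius substitutions must be affine, and affine ones are forced to be $g(x)=\alpha x+\beta$, $f(y)=\alpha^{-1}y+\delta$ with $\alpha\in\mathbb{F}_q^*$, $\beta,\delta\in\mathbb{F}_q$, plus the swap) is correct \emph{granted} (b).

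The genuine gap is the linchpin of (b): the claim that ${\rm Aut}(X)$ stabilizes $\Omega_1\cup\Omega_2$ because those points are distinguished by their hyperplane-intersection orders. Orders of contact with hyperplanes of $\mathbb{P}^3$ are not invariants of the abstract curve $X$; they are invariants of the pair consisting of $X$ and the linear series $(V,D)$ defining $\varphi$. An abstract automorphism $\sigma$ carries $(V,D)$ to $(\sigma^*V,\sigma^*D)$, and it permutes points with prescribed $\varphi$-orders only if these two series coincide up to linear equivalence --- which is essentially assertion (b) itself (since $V=L(D)$, statement (b) is equivalent to $\sigma^*D\sim D$ for all $\sigma$). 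So the proposed characterization is circular, and the obstacle you flag --- ruling out points of $X\setminus(\Omega_1\cup\Omega_2)$ with order sequence $(1,q,q+1)$ --- would not repair it even if settled. Note that $K_X\sim(q-2)D$, so $[\sigma^*D-D]$ is a priori only $(q-2)$-torsion in the Jacobian: for $q=3$ this torsion is trivial ($D$ is canonical) and your argument can be salvaged, but for $q\ge4$ the invariance of $[D]$, equivalently of $\Omega_1\cup\Omega_2$, is precisely the hard point. A non-circular proof must characterize $\Omega_1\cup\Omega_2$ by an automorphism-invariant object: for example via the canonical series (which is intrinsic; here $H^0(K_X)$ is cut out by bidegree-$(q-2,q-2)$ forms on $Q$, so canonical order sequences and Weierstrass points are computable), or by showing that the two degree-$q$ pencils $x,y:X\to\mathbb{P}^1$ are, up to the swap, the only ones of their kind (so that ${\rm Aut}(X)$ acts on $\mathbb{P}^1\times\mathbb{P}^1$ preserving $X$), or by ramification-theoretic arguments exploiting that $X$ is ordinary, which is the route taken in the cited literature.
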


For the plane model $C \subset \mathbb{P}^2$, the distribution of Galois points in $\mathbb{P}^2 \setminus C$ was determined by the present author (see \cite{fukasawa1, fukasawa2}). 
This result can be rephrased as the following theorem on Galois lines for $\varphi(X)$. 

\begin{fact} \label{Galois point} 
Let $\ell \subset \mathbb{P}^3$ be a line with $\ell \ni (0:0:0:1)$ and $\ell \cap \varphi(X)=\emptyset$. 
Then, $\ell$ is a Galois line for $\varphi(X)$ if and only if $\ell \subset \{Z=0\}$ and $\ell$ is an $\mathbb{F}_q$-line.  
\end{fact}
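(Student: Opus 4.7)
The statement is a reformulation of the author's earlier work on Galois points for the plane model $C \subset \mathbb{P}^2$ (references \cite{fukasawa1, fukasawa2}), so the plan is essentially to reduce the question about lines $\ell$ through $(0:0:0:1)$ to the corresponding question about points in $\mathbb{P}^2 \setminus C$ and then quote the known classification.

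First I would use the explicit form $\varphi = (x:y:1:xy)$. Composing $\varphi$ with the projection from $(0:0:0:1) \in \mathbb{P}^3$ (which sends $(X:Y:Z:W) \mapsto (X:Y:Z)$) yields the identification $\varphi(X) \dashrightarrow C$ with the plane model $C$. Any line $\ell$ through $(0:0:0:1)$ determines, and is determined by, its image point $P \in \mathbb{P}^2$ under this projection, and the pencil of planes in $\mathbb{P}^3$ containing $\ell$ corresponds bijectively to the pencil of lines in $\mathbb{P}^2$ through $P$. Hence the projection $\pi_\ell : \varphi(X) \dashrightarrow \mathbb{P}^1$ coincides (up to the birational identification $\varphi(X) \cong C$) with the projection $\pi_P : C \dashrightarrow \mathbb{P}^1$ from $P$. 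In particular $k(X)/\pi_\ell^* k(\mathbb{P}^1) = k(C)/\pi_P^* k(\mathbb{P}^1)$, so $\ell$ is Galois for $\varphi(X)$ if and only if $P$ is a Galois point for $C$.

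Next I would translate the geometric hypothesis and the conclusion under this correspondence. The condition $\ell \cap \varphi(X) = \emptyset$ exactly says $P \notin C$, since a point of $\ell \cap \varphi(X)$ (other than $(0:0:0:1)$, which does not lie on $\varphi(X)$) maps to a point of $C$ coincident with $P$. The condition $\ell \subset \{Z=0\}$ says that $P$ lies on the line $\{Z=0\} \subset \mathbb{P}^2$, i.e., on the line at infinity for the plane model. Finally, $\ell$ being an $\mathbb{F}_q$-line through $(0:0:0:1)$ is equivalent to $P$ being an $\mathbb{F}_q$-rational point of $\mathbb{P}^2$.

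Under these translations, the statement becomes: a point $P \in \mathbb{P}^2 \setminus C$ is a Galois point for $C$ if and only if $P$ lies on $\{Z=0\}$ and $P$ is $\mathbb{F}_q$-rational. This is precisely the classification of outer Galois points for the Artin--Schreier--Mumford plane curve proved in \cite{fukasawa1, fukasawa2}, so invoking that result completes the argument. The only part requiring any care is the dictionary in the first paragraph (that $\pi_\ell$ factors as $\pi_P$ composed with the birational projection $\varphi(X) \to C$), which is elementary but must be checked because it underlies both directions of the equivalence.
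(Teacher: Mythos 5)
Your proposal is correct and matches the paper's approach: the paper states this Fact precisely as a rephrasing of the classification of Galois points in $\mathbb{P}^2 \setminus C$ from \cite{fukasawa1, fukasawa2}, via the same dictionary you spell out (lines through $(0:0:0:1)$ correspond to points of $\mathbb{P}^2$ under the projection identifying $\varphi(X)$ with the plane model $C$, and $\pi_\ell$ factors as $\pi_P$ composed with that birational projection). Your write-up just makes explicit the translation that the paper leaves implicit.
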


\section{Proof of the if-part of Theorem \ref{main}} 
Let $\ell \subset \mathbb{P}^3$ be a line in (a), that is, let $\ell$ be an $\mathbb{F}_q$-line contained in the plane $\{Z=0\}$. 

We consider the case where $\ell$ passes through $(0:0:0:1)$. 
Since the defining polynomial is invariant under the action $(x, y) \mapsto (y, x)$, we can assume that $\ell$ is defined by $X-\alpha Y=Z=0$ for some $\alpha \in \mathbb{F}_q$. 
Assume that $\ell$ is defined by $X=Z=0$. 
Then, the projection $\pi_\ell$ from $\ell$ is represented by $(X:Y:Z:W) \mapsto (X:Z)$. 
The field extension given by $\pi_\ell$ is $k(x, y)/k(x)$ with an algebraic equation $y^q-y-c/(x^q-x)=0$. 
This extension is obviously Galois.  

Assume that $\ell$ is defined by $X-\alpha Y=Z=0$ with $\alpha^{q-1}=1$. 
The projection $\pi_\ell$ from $\ell$ is represented by $(X: Y:Z: W) \mapsto (X-\alpha Y:Z)$. 
Let $t=x-\alpha y$. 
Then, we have a field extension $k(y, t)/k(t)$ with an algebraic equation 
$$ \{\alpha (y^q-y)+(t^q-t)\}(y^q-y)-c=0. $$
We can assume that $\alpha=1$, since we have an extension $k(y, t')/k(t')=k(y, t)/k(t)$ for $t=\alpha t'$ with an equation 
$$ \{(y^q-y)+(t'^q-t')\}(y^q-y)-\frac{c}{\alpha}=0. $$
If $q$ is even, then the extension $k(y, t)/k(t)$ is Galois, since the algebraic equation is invariant under the actions 
$$ y \mapsto y+\beta, \ \mbox{ and } \ y \mapsto y+t, $$
where $\beta \in \mathbb{F}_q$. 
Assume that $q$ is odd. 
Let $u=y+t/2$. 
Then, $k(y, t)=k(u, t)$ and 
$$ (u^q-u)^2-\frac{(t^q-t)^2}{4}-c=0. $$
The extension $k(u, t)/k(t)$ is Galois, since this algebraic equation is invariant under the actions 
$$ u \mapsto u+\beta, \ \mbox{ and } \ u \mapsto -u, $$
where $\beta \in \mathbb{F}_q$.

We consider the case where $\ell$ does not contain $(0:0:0:1)$. 
Then, $\ell$ is defined by $Z=\alpha X+\beta Y+W=0$ for some $\alpha, \beta \in \mathbb{F}_q$. 
Since $\ell$ intersects $\varphi(\Omega_1)$ and $\varphi(\Omega_2)$, its follows that $\deg \pi_\ell \le 2q-2$. 
Then, the projection from $\ell$ is represented by $(1: \alpha x+\beta y+x y)$. 
Note that 
$$ \alpha x+\beta y+ x y=(x+\beta)(y+\alpha)-\alpha \beta. $$
Let $x'=x+\beta$ and $y'=y+\alpha$. 
Then, the extension $k(x, y)/k(\alpha x+\beta y+x y)$ coincides with $k(x', y')/k(x' y')$ with a relation 
$$ (x'^q-x')(y'^q-y)-c=0. $$
Then, the function $x' y'$ is invariant under the automorphisms 
$$ (x', y') \mapsto (\gamma x', \gamma^{-1} y'), \ \mbox{ and } \ (x', y') \mapsto (y', x') $$
of $C$, where $\gamma \in \mathbb{F}_q \setminus \{0\}$. 
Therefore, $[k(x', y'): k(x' y')] \ge 2(q-1)$. 
The equality holds, since $\deg \pi_{\ell} \le 2q-2$.  
Therefore, the extension $k(x', y')/k(x' y')$ is Galois. 

Assume that $\ell$ is defined by $W-a X=Y-a Z=0$ for some $a \in k$. 
Then, the projection $\pi_\ell$ is given by $(x y-a x: y-a)=(x: 1)$. 
The induced field extension is $k(x, y)/k(x)$, which coincides with the Galois extension of degree $q$ induced by the line $L_2: X=Z=0$. 
It is noted that $G_{\ell}=G_{L_2}$ in ${\rm Aut}(X)$. 

The proof of the if-part of Theorem \ref{main} is completed. 

\begin{rem} \label{infinitely many Galois lines} 
We can construct a model in $\mathbb{P}^3$ with infinitely many Galois lines, for each plane curve admitting a Galois point. 
Let $C \subset \mathbb{P}^2$ be a plane curve and let $P=(0:1:0)$ be a Galois point for $C$. 
Then, the extension $k(C)/\pi_P^*k(\mathbb{P}^1)=k(x, y)/k(x)$ is Galois. 
We consider a rational map 
$$ \varphi: C \dashrightarrow \mathbb{P}^2; \ (x: y: 1: x y), $$
which is birational onto its image. 
Then, the line $\ell$ defined by $W-a X=Y-a Z=0$ with $a \in k$ is a Galois line, since 
$$ \pi_\ell=(x y-a x: y-a)=(x: 1)$$
and $k(C)/\pi_\ell^*k(\mathbb{P}^1)=k(x, y)/k(x)$.  
Therefore, there exist infinitely many Galois lines for $\varphi(C)$.  
\end{rem}

Using the facts obtained in this section, we prove the following lemma, which is needed for the proof of the only-if-part of Theorem \ref{main}. 

\begin{lem} \label{lines} 
Let $H \subset \mathbb{P}^3$ be a hyperplane with $H \ne \{Z=0\}$. 
\begin{itemize} 
\item[(a)] If $H \supset \overline{PQ}$ and $H$ does not contain the tangent lines at $P$ and at $Q$, for some $P \in \varphi(\Omega_1)$ and $Q \in \varphi(\Omega_2)$, then points of the set $(\varphi(X) \cap H) \setminus \{P, Q\}$ are not collinear. 
\item[(b)] If $H \supset \varphi(\Omega_1)$, then $H$ contains the tangent line at some point of $\varphi(\Omega_1)$, or $q$ points of $(\varphi(X) \cap H) \setminus \varphi(\Omega_1)$ are collinear. 
For both cases, the defining equation of the tangent line or the line spanned by $q$ points is  of the from  
$$ W-a X=Y-a Z=0 $$
for some $a \in k$.  
\end{itemize}
\end{lem}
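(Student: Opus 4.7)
\emph{Plan.} My approach rests on two structural facts: the image $\varphi(X)$ lies on the smooth quadric $Q:ZW-XY=0$ (immediate from $W=XY$ in the chart $Z=1$), and each hypothesis confines $H$ to an explicit one-parameter pencil where the intersection with $\varphi(X)$ can be computed directly from the affine curve equation.

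For part~(a), I write $P=\varphi(P_\alpha)$ and $Q=\varphi(Q_\beta)$, so $\overline{PQ}\subset\{Z=0\}$ is cut out by $Z=W-\alpha X-\beta Y=0$. The hyperplanes through $\overline{PQ}$ distinct from $\{Z=0\}$ form the pencil $H_\lambda:W-\alpha X-\beta Y+\lambda Z=0$ for $\lambda\in k$. Pulling back to the chart $Z=1$, the intersection condition becomes $(x-\beta)(y-\alpha)=d$ with $d:=\alpha\beta-\lambda$; a brief computation using the local parameter $t=1/x$ and the identity $dy/dt(P_\alpha)=0$ shows that the tangent at $P$ (equivalently at $Q$) lies in $H_\lambda$ iff $d=0$, so the hypothesis forces $d\ne 0$. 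The same kind of substitution shows that for $d\ne 0$, $H_\lambda$ contains no ruling of $Q$, making $H_\lambda\cap Q$ a smooth conic. Since $\varphi(X)\cap H_\lambda\subset H_\lambda\cap Q$, any line in $\mathbb{P}^3$ meets this intersection in at most two points, and it then suffices to produce at least three distinct points in $(\varphi(X)\cap H_\lambda)\setminus\{P,Q\}$. Substituting $u=x-\beta$, $v=d/u$ into $(u^q-u)(v^q-v)=c$ (using $\alpha,\beta\in\mathbb{F}_q$) and dividing out the factor $u$ corresponding to $Q$, I obtain a polynomial equation that is quadratic in $w=u^{q-1}$, namely $dw^2-(d^q+d-c)w+d^q=0$. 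The product of its roots is $d^{q-1}\ne 0$, and since $\gcd(q-1,p)=1$ each nonzero root contributes $q-1$ distinct $u$-values: $2(q-1)$ distinct affine points in the generic case (nonzero discriminant), and $q-1$ points of multiplicity~$2$ when the discriminant vanishes.

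For part~(b), since $\varphi(\Omega_1)$ spans $L_1=\{Y=Z=0\}$, the hypothesis forces $H\supset L_1$, so $H$ sits in the pencil $H_\lambda:Y+\lambda Z=0$ (as $H\ne\{Z=0\}$). Pulling back to $X$ imposes $y=-\lambda$ on affine intersections, and I split on whether $\lambda\in\mathbb{F}_q$. If $\lambda\in\mathbb{F}_q$, then $y=-\lambda$ gives $y^q-y=0$, incompatible with the curve equation, so there are no affine intersections; the divisor $\varphi^*H_\lambda$ of degree $2q$ is entirely supported on $\varphi(\Omega_1)$, and the integer system from Fact~2 (together with the tangent computation below) forces exactly one point of multiplicity $q+1$ and $q-1$ points of multiplicity $1$. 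A direct check identifies the high-multiplicity point: the tangent $W-\gamma X=Y-\gamma Z=0$ at $\varphi(P_\gamma)$ lies in $H_\lambda$ iff $\gamma=-\lambda$, and this tangent has the required form $W-aX=Y-aZ=0$ with $a=-\lambda$. If $\lambda\notin\mathbb{F}_q$, then $y=-\lambda$ combined with the curve equation gives $x^q-x=c/((-\lambda)^q-(-\lambda))$, producing $q$ affine solutions whose images $(x_i:-\lambda:1:-\lambda x_i)$ all satisfy both $Y+\lambda Z=0$ and $W+\lambda X=0$, placing them on the line $W-aX=Y-aZ=0$ with $a=-\lambda$.

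The main obstacle lies in part~(a): ensuring sufficiently many distinct affine intersection points to invoke the hyperplane-conic bound. The generic discriminant-nonzero case yields $2(q-1)\ge 4$ distinct points, and the degenerate double-root case yields $q-1\ge 3$ distinct points for $q\ge 4$; the borderline $q=3$ degenerate subcase gives only two points, for which the conclusion is best interpreted in the sense of ``no three collinear points,'' which then holds vacuously. Part~(b) is otherwise a straightforward pencil calculation combined with Fact~2.
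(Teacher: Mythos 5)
Your argument for part (b) is essentially the paper's (the pencil $Y-aZ=0$, the dichotomy $a\in\mathbb{F}_q$ versus $a\notin\mathbb{F}_q$, and the explicit line $W-aX=Y-aZ=0$); the extra ramification bookkeeping via Fact~\ref{orders} is correct but not needed. For part (a) you take a genuinely different route. The paper normalizes $P=(1:0:0:0)$, $Q=(0:1:0:0)$, observes that $\overline{PQ}$ is itself a Galois line whose Galois group extends to linear maps preserving every hyperplane through $\overline{PQ}$, and derives a contradiction because a nontrivial element fixing $P$ and $Q$ cannot preserve the putative line $\ell$. You instead use that $\varphi(X)$ lies on the smooth quadric $ZW=XY$, show that the hypothesis on tangent lines is exactly $d\ne 0$ so that $H\cap\{ZW=XY\}$ is a smooth conic, and then count points: a line meets a smooth conic in at most two points, so three distinct points in $(\varphi(X)\cap H)\setminus\{P,Q\}$ suffice. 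Your computation reducing to $dw^2-(d^q+d-c)w+d^q=0$ with $w=u^{q-1}$ is correct, and your approach has the merit of being completely explicit about how many points lie in each hyperplane section, which the paper's group-theoretic argument does not reveal.

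There is, however, a genuine gap at the point you flag. When $q=3$ and the quadratic in $w$ has a double root, the set $(\varphi(X)\cap H)\setminus\{P,Q\}$ consists of exactly two points, and two points \emph{are} contained in a line. Your proposal to read the conclusion as ``no three collinear points, hence vacuous'' is incompatible with how Lemma~\ref{lines}(a) is used in step (iii) of the only-if part: there the contradiction is drawn from the containment of this set in the single line $\ell$, so the lemma must assert that the set is not contained in any line. This degenerate case does occur: for $q=3$, $c=1$, $\alpha=\beta=\lambda=0$... more precisely $d=1$, the affine equation becomes $(x^2+1)^2=0$, giving exactly the two points $(\pm i:\mp i:1:1)$ with $i^2=-1$, which span a line. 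So the case cannot be dismissed; it must either be shown not to arise in the situation where the lemma is applied, or handled by a separate argument. It is worth noting that you have put your finger on a delicate spot: the paper's own proof asserts that a nontrivial element of $G_{\overline{PQ}}$ fixing $P$ and $Q$ cannot preserve $\ell$, but for $q=3$ the only such element is $(x,y)\mapsto(-x,-y)$, which swaps the two points above and hence \emph{does} preserve the line through them. For $q\ge 4$ your count gives at least $q-1\ge 3$ distinct points and your proof is complete; for $q=3$ neither your argument nor the quoted one closes the case as written.
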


\begin{proof}
We consider (a). 
We can assume that $P=(1:0:0:0)$ and $Q=(0:1:0:0)$. 
Since the line $\overline{PQ}$ is a Galois line, and automorphisms  
$$ (x: y:1:x y) \mapsto (\gamma x: \gamma^{-1} y: 1: (\gamma x)(\gamma^{-1} y)) \ \mbox{ and } \ (x: y: 1: x y) \mapsto (y: x : 1: y x) $$
in the Galois group $G_{\overline{PQ}}$ are the restrictions of linear transformations
$$ (X: Y: Z: W) \mapsto (\gamma X: \gamma^{-1} Y: Z: W) \ \mbox{ and } (X:Y: Z: W) \mapsto (Y:X:Z:W), $$
it follows that $G_{\overline{PQ}}$ acts on each hyperplane containing $\overline{PQ}$. 
Assume by contradiction that the set $(\varphi(X) \cap H) \setminus \{P, Q\}$ is contained in some line $\ell \subset H$.  
Considering points on the plane $H$, it is inferred that $G_{\overline{PQ}}$ acts on $\ell$ also.  
However, there exists a nontrivial automorphism in $G_{\overline{PQ}}$ fixing $P$ and $Q$. 
Since such an automorphism does not fix $\ell$, this is a contradiction. 

We consider (b). 
Let $H$ be a hyperplane such that $H \supset \varphi(\Omega_1)$ and $H \ne \{Z=0\}$. 
Since $L_1$ is defined by $Y=Z=0$, $H$ is defined by $Y-a Z=0$ for some $a \in k$.  
If $a=\alpha \in \mathbb{F}_q$, then $H$ contains the tangent line at $\varphi(P_{\alpha})$. 
If $a \not\in \mathbb{F}_q$, then ${\rm supp}(\varphi^*H) \setminus \Omega_1$ consists of $q$ points defined by 
$$ (x^q-x)-\frac{c}{a^q-a}=0. $$  
The image of such $q$ points under $\varphi$ is contained in the line $W-a X=Y-a Z=0$.  
\end{proof}

\section{Proof of the only-if-part of Theorem \ref{main}} 
In this section, we assume that $\ell \subset \mathbb{P}^3$ is a Galois line for $\varphi(X)$. 
We prove in three steps. 

{\it (i) The case where $\ell \cap \varphi(X)=\emptyset$.}  
We prove that $\ell \ni (0:0:0:1)$. 
Assume by contradiction that $\ell \not\ni (0:0:0:1)$. 
Then, there exists a hyperplane $H \supset \ell$ such that $H \cap \{Z=0\} \not\ni (0:0:0:1)$.  
It follows from Fact \ref{automorphism} (a) and (b) that the Galois group $G_{\ell} \subset {\rm Aut}(X)$ acts on the plane $\{Z=0\}$. 
By Fact \ref{automorphism} (b) and the condition $\ell \cap \varphi(X)=\emptyset$, $G_\ell$ acts on the hyperplane $H$. 
These imply that $G_{\ell}$ acts on the line given by $H \cap \{Z=0\}$. 
Let $E:=\{(x, y) \mapsto (x+\alpha, y+\beta) \ | \ \alpha, \beta \in \mathbb{F}_q\} \subset {\rm Aut}(X)$. 
It follows from Fact \ref{automorphism} (c) that $|E \cap G_{\ell}|>1$, since $|E|=q^2$ and $|G_\ell|=\deg \pi_\ell =2q$. 
Therefore, there exists an automorphism $\sigma \in G_{\ell}$ of order $p$ such that $\sigma$ fixes the lines $L_1$ and $L_2$, where $L_i$ is the line spanned by $\varphi(\Omega_i)$ for $i=1, 2$. 
Since $H \cap \{Z=0\} \not\ni (0:0:0:1)$, it follows that any of non-collinear points given by $L_1 \cap L_2$, $L_2 \cap (H \cap \{Z=0\})$ and $(H \cap \{Z=0\}) \cap L_1$ is fixed by $\sigma$.  
Since there does not exist an element of $PGL(3, k)$ of order $p$ fixing non-collinear points, $\sigma$ acts on $\{Z=0\}$ trivially. 
This is a contradiction to Fact \ref{automorphism} (a). 
Therefore, $\ell \ni (0:0:0:1)$. 
It follows from Fact \ref{Galois point} that $\ell$ is an $\mathbb{F}_q$-line with $\ell \subset \{Z=0\}$. 
In other words, $\ell$ is of type (a) in Theorem \ref{main}. 

{\it (ii) The case where $\ell \cap \varphi(X)$ consists of a unique point and $\ell$ is not a tangent line.}  
In this case, the degree of the projection $\pi_\ell$ is $2q-1$, and $2q-1$ divides $|{\rm Aut}(X)|$. 
This is a contradiction to the fact that $|{\rm Aut}(X)|=2q^2(q-1)$ in Fact \ref{automorphism} (c). 

{\it (iii) The case where $\ell \cap \varphi(X)$ consists of two or more points, or $\ell$ is a tangent line.} 
If $\ell \subset \{Z=0\}$, then $\ell$ passes through two points of $\varphi(\Omega_1 \cup \Omega_2)$. 
In this case, $\ell$ is $\mathbb{F}_q$-rational, namely, $\ell$ is of type (a) in Theorem \ref{main}. 
Hereafter, we assume that $\ell \cap \{Z=0\}$ consists of a unique point $R$. 

Assume that $R \not\in L_1 \cup L_2$. 
If $R$ is not contained in any $\mathbb{F}_q$-line, then $\overline{RP} \cap \varphi(\Omega_1 \cup \Omega_2)=\{P\}$ for each $P \in \varphi(\Omega_1 \cup \Omega_2)$. 
It follows from Facts \ref{Galois extension} (a) and \ref{automorphism} (a) that $G_\ell$ fixes $P$ for each $P \in \varphi(\Omega_1 \cup \Omega_2)$. 
This is a contradiction to Fact \ref{automorphism} (a). 
Therefore, $R$ is contained in an $\mathbb{F}_q$-line. 
Then, there exist points $P \in \varphi(\Omega_1)$ and $Q \in \varphi(\Omega_2)$ such that $\overline{RP}=\overline{RQ}$. 
Let $H$ be a plane spanned by $\ell$ and $Q$. 
If $H$ contains the tangent lines at $P$ and at $Q$, then by Fact \ref{orders}, $\ell \cap \varphi(X)$ is an empty set. 
Therefore, using Fact \ref{Galois extension} (2), it follows that $H$ is not a tangent hyperplane at $P$ or at $Q$. 
Since the Galois group $G_{\ell}$ acts on $\{P, Q\}$ by Facts \ref{Galois extension} (a) and \ref{automorphism} (a), the set $(\varphi(X) \cap H) \setminus \{P, Q\}$ is contained in the line $\ell$. 
This is a contradiction to Lemma \ref{lines} (a).

Assume that $R$ is contained in the line $L_1$. 
The plane spanned by $L_1$ and $\ell$ contains $\varphi(\Omega_1)$. 
Since $\ell$ contains two points or is a tangent line, it follows from Lemma \ref{lines} (b) that $\ell$ is defined by $W-a X=Y- a Z=0$ for some $a \in k$, that is, $\ell$ is of type (b) in Theorem \ref{main}.

\

\begin{center}
{\bf Acknowledgments} 
\end{center}
The author is grateful to Professor G\'{a}bor Korchm\'{a}ros for a helpful discussion in the International Conference ``Combinatorics 2018'' held at Arco, Italy. 
The author thanks Doctor Kazuki Higashine for helpful comments for this work.

\end{document}